\font\tenscr=rsfs10 
\font\sevenscr=rsfs7 
\font\fivescr=rsfs5 
\font\tenscr=rsfs10 
\font\sevenscr=rsfs7 
\font\fivescr=rsfs5 
\newtheorem{theorem}{Theorem}[section]
\newtheorem{corollary}[theorem]{Corollory}
\newtheorem{lemma}[theorem]{Lemma}
\newtheorem{example}{Example}
\newtheorem{definition} [theorem]{Definition}
\newcommand{\bthm}{\begin{theorem}}
\newcommand{\ethm}{\end{theorem}}
\newcommand{\blem}{\begin{lemma}}
\newcommand{\elem}{\end{lemma}}
\newcommand{\bcor}{\begin{corollary}}
\newcommand{\ecor}{\end{corollary}}
\newcommand{\bprop}{\begin{proposition}}
\newcommand{\eprop}{\end{proposition}}
\newcommand{\bdefn}{\begin{definition}}
\newcommand{\edefn}{\end{definition}}
\newcommand{\bpf}{\begin{proof}}
\newcommand{\epf}{\end{proof}}
\newcommand{\bi}{\begin{itemize}}
\newcommand{\ei}{\end{itemize}}
\newcommand{\bc}{\begin{cases}}
\newcommand{\ec}{\end{cases}}
\newcommand{\ba}{\begin{array}}
\newcommand{\ea}{\end{array}}
\newcommand{\be}{\begin{equation}}
\newcommand{\ee}{\end{equation}}
\newcommand{\bea}{\begin{eqnarray}}
\newcommand{\eea}{\end{eqnarray}}
\newcommand{\beaa}{\begin{eqnarray*}}
\newcommand{\eeaa}{\end{eqnarray*}}
\newcommand{\beastar}{\begin{eqnarray*}}
\newcommand{\eeastar}{\end{eqnarray*}}
\def \ma {\mathfrak{M}_A} 
\def \mb {\mathfrak{M}_B} 
\def \mac {\mathfrak{M}_{A^C}}
\def \C {\mathbb C}
\def \CN {\mathbb C^N}
\def\R {\mathbb R}
\def \N {\mathbb N}
\def \bs{\setminus}
\def\h#1{\widehat {#1}}
\def\hh#1{\widehat {#1} \bs  {#1}}
\def\hr#1{h_r({#1})}
\def\tX{\widetilde X}
\def\oD{\overline{D}}
\def\pD{\partial{D}}
\def\sF{{\mathscr F}}
\def\Bzero#1{{\mathscr B}_0(#1)}
\def\forallBzero#1{\hbox{for all } f\in {{\mathscr B}_0(#1)}}
\def\textfrac#1#2{{\textstyle \frac{#1}{#2}}}
\def\od{\overline D}
\def\disc#1#2#3{D(#1_{#3},#2_{#3})} 
\def\odisc#1#2#3{{\overline D}(#1_{#3},#2_{#3})}
\def\od{\overline D}
\def\disc#1#2#3{D(#1_{#3},#2_{#3})} 
\def\odisc#1#2#3{{\overline D}(#1_{#3},#2_{#3})}
\begin{document}

\subjclass[2000]{46J10, 46J15, 32A65, 32E20}
\title[Topology of Gleason Parts]{Topology of Gleason Parts\\ in maximal ideal spaces\\ with no analytic discs}
\author{Alexander J. Izzo}
\address{Department of Mathematics and Statistics, Bowling Green State University, Bowling Green, OH 43403}
\thanks{The first author was partially supported by a Simons collaboration grant and by NSF Grant DMS-1856010.}
\email{aizzo@bgsu.edu}
\author{Dimitris Papathanasiou}
\thanks{The second author was supported by the Fonds de la Recherche Scienti que-FNRS, grant no. PDR T.0164.16}
\address{Institute de Mathématique,
Université de Mons,
7000 Mons, Belgium.}	
\email{dpapath@bgsu.edu}

\begin{abstract}
We strengthen, in various directions, the theorem of Garnett that every 
$\sigma$-compact, completely regular space $X$ occurs as a Gleason part for some uniform algebra.  In particular, we show that the uniform algebra can always be chosen so that its maximal ideal space contains no analytic discs.  We show that when the space $X$ is metrizable, the uniform algebra can be chosen so that its maximal ideal space is metrizable as well.  We also show that for every locally compact subspace $X$ of a Euclidean space, there is a compact set $K$ in some $\C^N$ so that $\hh K$ contains a Gleason part homeomorphic to $X$ and $\h K$ contains no analytic discs.
\end{abstract}

\maketitle

 \section{Introduction}
 Early in the study of uniform algebras it was conjectured that whenever the maximal ideal space $\ma$ of a uniform algebra $A$ on a compact Hausdorff space $K$ is strictly larger than $K$, then the complementary set $\ma\setminus K$ must contain an analytic disc.  This conjecture was Andrew Gleason's motivation for introducing his notion of {\em parts\/} in a maximal ideal space~\cite{Gleason}.  His hope was that each nontrivial part would be a union of analytic discs.  However, the conjecture about the existence of analytic discs in maximal ideal spaces turned out to be false as shown by Gabriel Stolzenberg~\cite{Stol1}.  Furthermore, it was later shown by John Garnett that in general, nontrivial Gleason parts need not be, in any reasonable sense, analytic sets.  Specifically Garnett proved the following striking theorem~\cite{Garnett}.
 
 \bthm\cite[Theorem~2]{Garnett}\label{Garnett-thm}
 Let $X$ be a $\sigma$-compact, completely regular space.  Then there exists a uniform algebra $B$ and a Gleason part $P$ for $B$ homeomorphic to $X$ such that the restriction $B|P$ is isometrically isomorphic to the algebra $C_b(X)$ of all bounded continuous functions on $X$. 
\ethm

Since it is easily seen that each Gleason part must be a $\sigma$-compact, completely regular space, this theorem characterizes the spaces that occur as Gleason parts.  Furthermore, the theorem decisively shows that Gleason parts need not contain analytic discs.  Nevertheless, there are analytic discs in the maximal ideal spaces of the uniform algebras constructed by Garnett.  Thus the theorem leaves open the possibility that the presence of a particular space as a Gleason part might force the existence of analytic discs in a maximal ideal space.  The main purpose of the present paper is to show that, on the contrary, every $\sigma$-compact, completely regular space occurs as a Gleason part for some uniform algebra whose maximal ideal space contains no analytic discs.

There are additional issues left unsettled by Garnett's theorem that we will address.  The maximal ideal space of the uniform algebra in Theorem~\ref{Garnett-thm} is enormous whenever the space $X$ is noncompact since it necessarily contains the Stone-\v Cech compactification of $X$.  Given a $\sigma$-compact, {\em metrizable} space $X$, one would like to get $X$ as a Gleason part in a {\em metrizable} maximal ideal space.  We will show that this can indeed be done, thus characterizing the spaces that occur as Gleason parts in metrizable maximal ideal spaces.  (In the case when $X$ is $\sigma$-compact and
{\em \hbox{locally} compact}, Garnett \cite[Corollary]{Garnett} gave a second construction of a uniform algebra with a Gleason part homeomorphic to $X$, simpler than the proof of Theorem~\ref{Garnett-thm}.  For $X$ a {\em locally compact}, $\sigma$-compact, metrizable space, this simpler construction yields a uniform algebra whose maximal ideal space is metrizable although that is not pointed out in Garnett's paper.)  In the case when $X$ embeds in a Euclidean space, one would like to get $X$ as a Gleason part in the maximal ideal space of a finitely generated uniform algebra.  Here the issue can be rephrased more concretely in terms of polynomial hulls as follows: Given a $\sigma$-compact space $X$ that embeds in a Euclidean space, does there exist a compact set $K$ in some $\C^N$ such that the polynomial hull $\h K$ of $K$ contains a space $P$  homeomorphic to $X$ and such that $P$ is a Gleason part for the uniform algebra $P(K)$?  Under the additional hypothesis that $X$ is locally compact, we will answer this affirmatively.  This fails to provide a characterization of the topological spaces that occur as Gleason parts in polynomial hulls, for as we will show, there exist compact planar sets $K$ such that the uniform algebra $R(K)$ has a Gleason part that is not locally compact.  The question of whether every $\sigma$-compact subspace of a Euclidean space occurs as a Gleason part in a polynomial hull remains open.

Of course when we require our Gleason part $P$ to lie in a metrizable space, or a Euclidean space, we must give up the condition in Garnett's theorem that the restriction $B|P$ is isometrically isomorphic to $C_b(X)$.  
We will show, however, that in the general case, 
we can make the restriction $B|P$ isometrically isomorphic to the algebra of continuous functions determined by {\em any} compactification of $X$, while when we require $P$ to lie in a metrizable space, or a Euclidean space, the compactification must of course also be restricted to lie in such a space, but there are no further restrictions.

Another issue is that because the original conjecture about analytic structure concerned analytic discs in the complementary set $\ma\bs K$, it is of interest to obtain our Gleason part in this complementary set.  We will indeed achieve this.   Actually Garnett's original construction yielded the Gleason part in this complementary set as well although he did not remark upon this. 

The precise statements of our main theorems are as follows.

\begin{theorem}\label{T:1}
Let $X$ be a $\sigma$-compact, completely regular space.  Then there exists a uniform algebra $B$ on a compact Hausdorff  space $K$ with a Gleason part $P$ homeomorphic to $X$ contained in $\mb\bs K$ and  such that $\mb$ contains no analytic discs.  Furthermore, if $\tX$ is any compactification of $X$, then $B$ and $K$ can be chosen so that the restriction $B|P$ is isometrically isomorphic to $C(\tX)$.
\ethm

\begin{theorem}\label{T:2}
Let $X$ be a $\sigma$-compact, metrizable space.  Then there exists a uniform algebra $B$ on a compact, metrizable space $K$ with a Gleason part $P$ homeomorphic to $X$ contained in $\mb\bs K$ and  such that $\mb$ contains no analytic discs.  Furthermore, if $\tX$ is any metrizable compactification of $X$, then $B$ and $K$ can be chosen so that the restriction $B|P$ is isometrically isomorphic to $C(\tX)$.
\ethm

\begin{theorem}\label{T:3}
Let $X$ be a locally compact subspace of $\R^m$.  Then there exists a compact set $K$ in $\C^{2m+2}$ such that $P(K)$ has a Gleason part $P$ homeomorphic to $X$ contained in $\hh K$ and such that $\h K$ contains no analytic discs.  Furthermore, if $\tX$ is any compactification of $X$ contained in $\R^m$, then $K$ can be chosen so that the restriction $P(K)|P$ is isometrically isomorphic to $C(\tX)$.
\ethm

In connection with Theorem~\ref{T:3}, note that every locally compact subspace of a Euclidean space is $\sigma$-compact.  
It should also be noted that by the theorem of Menger and N\"obeling about embedding spaces of finite topological dimension in Euclidean spaces \cite[Theorem~V.2]{HW}, the locally compact subspaces of $\R^m$ are precisely the locally compact, separable, metrizable spaces of finite topological dimension. 

Given that the introduction of Gleason parts was motivated by the search for analytic structure, it is surprising that until recently little attention was given in the literature to the existence of Gleason parts in maximal ideal spaces containing no analytic discs.  The first result concerning Gleason parts in this setting is due to Richard Basener~\cite{Basener} who proved that there exists a compact set in $\C^2$ with a nontrivial rational hull that contains no analytic discs but contains a Gleason part of positive 
4-dimensional measure.  From this it follows easily, by a standard device recalled at the end of the next section, that there exists a compact set in $\C^3$ with a nontrivial {\it polynomial\/} hull that also contains no analytic discs but contains a Gleason part of positive 
4-dimensional Hausdorff measure.  Further examples of nontrivial polynomial and rational hulls without analytic discs and containing Gleason parts that are large in the sense of Hausdorff measure were given recently by the first author of the present paper in \cite{Izzo} and \cite{Izzo2}.
Brian Cole~\cite{Cole} gave a construction that produces a uniform algebra $A$ defined on a proper subset of its maximal ideal space $\ma$ such that 
$\ma$ contains {\em no} nontrivial Gleason parts \cite{Cole}.  Fifty years later it was shown in the paper \cite{CGI} of Cole, Swarup Ghosh, and the first author of the present paper, that there exists a nontrivial polynomial hull in $\C^3$ containing no nontrivial Gleason parts.  

In Garnett's proof of Theorem~\ref{Garnett-thm}, first one constructs a certain uniform algebra $A$ whose maximal ideal space contains a Gleason part that contains a subspace homeomorphic to $X$.  The uniform algebra $B$ whose existence is asserted by the theorem is then obtained from $A$.  The proofs of Theorems~\ref{T:1}, \ref{T:2}, and~\ref{T:3} all have this same overall structure.  However, substantial changes are needed.  Curiously, in the proofs of Theorems~\ref{T:1} and~\ref{T:2}, the substantial changes are in the second step, while in the proof of Theorem~\ref{T:3} they are in the first step.  In the proofs of Theorems~\ref{T:1} and~\ref{T:2}, we construct the initial uniform algebra as in Garnett's proof.  The maximal ideal space of this uniform algebra contains analytic discs.  So as to eliminate the analytic discs, we carry out the passage from $A$ to $B$ using Coles's method of root extensions~\cite{Cole}, a completely different approach from the one used by Garnett.  (Note that since Theorem~\ref{T:1} contains Theorem~\ref{Garnett-thm}, this gives an alternative proof of Garnett's theorem.)  To prove Theorem~\ref{T:3}, one must obtain at the end a finitely generated uniform algebra.  The authors do not see a way to carry out the passage from $A$ to $B$ using the Cole construction so as to achieve this.  Therefore, in the proof of Theorem~\ref{T:3}, we change the construction of the initial uniform algebra so as to obtain at the outset a uniform algebra $A$ whose maximal ideal space contains no analytic discs.  We then carry out the passage from $A$ to $B$ by a modification of the argument used by Garnett.  The authors have not been able to carry out the construction of the initial uniform algebra $A$ used in the proof of Theorem~\ref{T:3} in the case when $X$ is not locally compact.  Nevertheless, the approach used in the proof of Theorem~\ref{T:3} can be used to give an alternative proof of the special case of Theorems~\ref{T:1} and~\ref{T:2} when $X$ is locally compact.

In the next section we recall some definitions and notations already used above.  
Theorems~\ref{T:1} and~\ref{T:2} are proven in Section~3.  Section~4 contains the proof of Theorem~\ref{T:3} along with examples showing that the theorem does not provide a full characterization of the spaces that occur as Gleason parts in polynomial hulls.

%

\section{Preliminaries}~\label{prelim}

For
$K$ a compact Hausdorff space, we denote by $C(K)$ the algebra of all continuous complex-valued functions on $K$ with the supremum norm
$ \|f\|_{K} = \sup\{ |f(x)| : x \in K \}$.  A \emph{uniform algebra} $A$ on $K$ is a closed subalgebra of $C(K)$ that contains the constant functions and separates
the points of $K$.  We tacitly regard $K$ as a subspace of the maximal ideal space $\ma$ of $A$ by identifying each point of $K$ with the corresponding point evaluation functional.  When convenient, we will also tacitly regard $A$ as a uniform algebra on $\ma$ via the Gelfand transform.

For a compact set $K$ in $\CN$, the \emph{polynomial hull} $\h K$ of $K$ is defined by
$$\h K=\{z\in\CN:|p(z)|\leq \max_{x\in K}|p(x)|\
\mbox{\rm{for\ all\ polynomials}}\ p\},$$
and the
\emph{rational hull} $\hr K$ of $K$ is defined by
$$\hr K = \{z\in\C^N: p(z)\in p(K)\ 
\mbox{\rm{for\ all\ polynomials}}\ p
\}.$$
The set $K$ is said to be \emph{polynomially convex} if $\h K = K$ and 
\emph{rationally convex} if $\hr K =K$.  We say that a polynomial hull $\h K$ (or rational hull $\hr K$) is \emph{nontrivial} if $\h K \neq K$ (or $\hr K \neq K$).

We denote by 
$P(K)$ the uniform closure on $K\subset\CN$ of the polynomials in the complex coordinate functions $z_1,\ldots, z_N$, and we denote by $R(K)$ the uniform closure of the rational functions  holomorphic on (a neighborhood of) $K$. 
Both $P(K)$ and $R(K)$ are uniform algebras, and
it is well known that the maximal ideal space of $P(K)$ can be naturally identified with $\h K$, and the maximal ideal space of $R(K)$ can be naturally identified with $\hr K$.

Throughout the paper, $D$ will denote the open unit disc in the complex plane, and $A(D)$ will denote the disc algebra, that is, the uniform algebra of continuous functions on $\oD$ that are holomorphic functions on $D$.

By an \emph{analytic disc} in $\CN$, we mean an injective holomorphic map $\sigma: D\rightarrow\CN$.
By the statement that a subset $S$ of $\CN$ contains no analytic discs, we mean that there is no analytic disc in $\CN$ whose image is contained in $S$.
An \emph{analytic disc} in the maximal ideal space $\ma$ of a uniform algebra $A$ is, by definition, an injective map $\sigma:D\rightarrow \ma$ such that the function $\hat f\circ \sigma$ is analytic on $D$ for every function $f$ in 
$A$, where $\hat f$ denotes the Gelfand transform of $f$.

Let $A$ be a uniform algebra on a compact Hausdorff space $K$.
The \emph{Gleason parts} for the uniform algebra $A$ are the equivalence classes in the maximal ideal space of $A$ under the equivalence relation $\varphi\sim\psi$ if $\|\varphi-\psi\|<2$ in the norm on the dual space $A^*$.  (That this really is an equivalence relation is well-known but {\it not\/} obvious!)
We say that a Gleason part is \emph{nontrivial} if it contains more than one point.
It is immediate that the presence of an analytic disc in the maximal ideal space of $A$ implies the existence of a nontrivial Gleason part.

The following two lemmas are standard.  (See \cite[Lemmas~2.6.1 and~2.6.2]{Browder}.)

\begin{lemma}\label{samepart}
Two multiplicative linear functionals $\phi$ and $\psi$ on a uniform algebra $A$ lie in the same Gleason part if and only if
$$\sup\{|\psi(f)|: f\in A, \|f\|\leq 1, \phi(f)=0\}< 1.$$
\end{lemma}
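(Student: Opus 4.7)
The plan is to prove this standard result by relating the supremum in the lemma, call it $d(\phi,\psi)$, directly to the dual-norm distance $\|\phi-\psi\|$ via Möbius transformations on the unit disc. Since every multiplicative linear functional on a unital uniform algebra has norm one, $d(\phi,\psi)\le 1$ automatically. I would show the two sharper bounds
$$ \|\phi-\psi\|\le 2d(\phi,\psi) \quad\text{and}\quad d(\phi,\psi)=1\ \Rightarrow\ \|\phi-\psi\|=2, $$
which together give the stated equivalence $d(\phi,\psi)<1 \iff \|\phi-\psi\|<2$.

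For the first bound, take $f\in A$ with $\|f\|\le 1$ and set $\alpha=\phi(f)$. The key observation is that when $|\alpha|<1$ the element $1-\bar\alpha f$ is invertible in $A$: its Gelfand transform is nonvanishing on $\ma$ since $|\hat f|\le 1$ there and $|1/\bar\alpha|>1$. Hence the Möbius combination
$$ h=\frac{f-\alpha}{1-\bar\alpha f} $$
lies in $A$ with $\phi(h)=0$, and since the scalar Möbius map $z\mapsto (z-\alpha)/(1-\bar\alpha z)$ sends $\oD$ into itself, a pointwise application on $\ma$ gives $\|h\|\le 1$. Applying the definition of $d(\phi,\psi)$ to $h$ and clearing the denominator yields $|\psi(f)-\phi(f)|\le 2d(\phi,\psi)$. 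The marginal case $|\alpha|=1$ is handled by applying the same estimate to $rf$ for $r\in(0,1)$ and letting $r\uparrow 1$.

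For the converse I would argue the contrapositive. Assuming $d(\phi,\psi)=1$, choose $f_n\in A$ with $\|f_n\|\le 1$, $\phi(f_n)=0$, and, after multiplying by unimodular scalars, $\psi(f_n)\to 1$. The idea is to apply a second Möbius map $z\mapsto(z-a_n)/(1-a_n z)$ with $a_n=1-\delta_n$ where $\delta_n\to 0$ is chosen to decay much more slowly than $1-\psi(f_n)$, and set $g_n:=(f_n-a_n)/(1-a_n f_n)$. The same invertibility and Möbius-stability arguments as above place $g_n$ in $A$ with $\|g_n\|\le 1$. A direct calculation gives $\phi(g_n)=-a_n\to -1$, while the slow-decay choice of $\delta_n$ makes $\psi(g_n)\to 1$. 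Therefore $|\phi(g_n)-\psi(g_n)|\to 2$, forcing $\|\phi-\psi\|=2$.

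The entire argument reduces to the two Möbius tricks above; no new idea is needed beyond checking that each Möbius combination actually lies in $A$ (secured by invertibility of its denominator via the Gelfand transform) and that the scalar Möbius map preserves $\oD$ (so norm control transfers to $A$). The only real subtlety is tuning $\delta_n$ correctly in the converse so that $g_n$ separates the two functionals by almost $2$; this is exactly why the naive choice $g_n=f_n$ only yields $\|\phi-\psi\|\ge 1$ and must be iterated by a second Möbius move.
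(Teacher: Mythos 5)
Your proof is correct. The paper does not actually prove this lemma---it cites it as standard, referring to Lemmas~2.6.1 and~2.6.2 of Browder's book---and your argument is essentially that standard proof: both directions rest on the same two Möbius-transformation tricks (composing $f$ with $z\mapsto(z-\alpha)/(1-\bar\alpha z)$, justified by invertibility of the denominator via the Gelfand transform). The only difference is that the classical treatment extracts from these computations an exact formula relating $\|\phi-\psi\|$ to the supremum $d(\phi,\psi)$, whereas you settle for the two qualitative bounds $\|\phi-\psi\|\le 2d(\phi,\psi)$ and $d(\phi,\psi)=1\Rightarrow\|\phi-\psi\|=2$; that is all the lemma requires, and your tuning of $\delta_n$ with $1-\psi(f_n)=o(\delta_n)$ in the converse direction is carried out correctly.
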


\begin{lemma}\label{peak}
Let $\phi$ and $\psi$ be two multiplicative linear functionals on a uniform algebra $A$.  If there is a function $f\in A$ such that $\|f\|=1$, $|\phi(f)|<1$, and $\psi(f)=1$, then $\phi$ and $\psi$ lie in different Gleason parts.
\end{lemma}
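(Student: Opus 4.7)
My plan is to reduce the statement, via Lemma~\ref{samepart}, to producing a single function $g\in A$ satisfying $\|g\|\leq 1$, $\phi(g)=0$, and $|\psi(g)|=1$. Such a $g$ immediately forces the supremum in Lemma~\ref{samepart} to be at least~$1$, hence $\phi$ and $\psi$ cannot lie in the same Gleason part.

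To construct $g$ from the given $f$, I would apply the disc automorphism that sends $a:=\phi(f)$ to $0$; that is, I would set
\[
  g \;:=\; \frac{f-a}{1-\bar a\, f}.
\]
Since $|a|<1$ by hypothesis and $\|f\|=1$, the element $\bar a f\in A$ satisfies $\|\bar a f\|\leq |a|<1$, so $1-\bar a f$ is invertible in $A$ (its values are bounded away from~$0$, and a Neumann-series argument inside the uniform algebra produces the inverse). Thus $g\in A$. Because the scalar Möbius map $z\mapsto(z-a)/(1-\bar a z)$ carries the closed unit disc into itself, applying it pointwise to $f$, whose range already lies in $\overline{D}$, immediately yields $\|g\|\leq 1$.

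The verification is then direct: multiplicativity of $\phi$ and $\psi$ gives $\phi(g)=(\phi(f)-a)/(1-\bar a\phi(f))=0$, while $\psi(g)=(\psi(f)-a)/(1-\bar a\psi(f))=(1-a)/(1-\bar a)$, whose numerator is the complex conjugate of its denominator and therefore has modulus~$1$. Lemma~\ref{samepart} then completes the argument.

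I do not foresee any substantive obstacle here; the entire content lies in spotting the Möbius transformation, which simultaneously achieves all three desired conditions precisely because the hypothesis $\psi(f)=1$ already places $\psi(f)$ on the boundary of the disc (so a disc automorphism sends it to another boundary point) while $|\phi(f)|<1$ places $\phi(f)$ in the interior (so the automorphism can be chosen to kill it). The only subtlety worth flagging is the justification that $1-\bar a f$ is invertible in $A$, which I would dispatch by the geometric-series expansion that converges in norm since $\|\bar a f\|<1$.
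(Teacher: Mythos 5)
Your argument is correct and is essentially the standard proof the paper cites (Browder, Lemma 2.6.2): apply the M\"obius automorphism killing $\phi(f)$ to get $g$ with $\|g\|\leq 1$, $\phi(g)=0$, $|\psi(g)|=1$, and invoke Lemma~\ref{samepart}. All the details check out, including the Neumann-series invertibility of $1-\bar a f$ and the computation $|\psi(g)|=|(1-a)/\overline{(1-a)}|=1$.
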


A subset $S$ of the maximal ideal space of a uniform algebra $A$ is said to be a {\it hull\/} if $S$ is the common zero set of some collection of Gelfand transforms of elements of $A$.  In other words, $S\subset\ma$ is a {\it hull\/} if 
\[ S=\{ \phi\in \ma: \hat f(\phi)=0 \ {\rm for\ all\ } f\in A \ {\rm such\ that\ } \hat f|S=0\}. \]
This use of the term {\it hull\/} should not be confused with the use of the word hull in the terms {\it polynomial hull\/} and {\it rational hull\/}.
The ideal of functions in $A$ whose Gelfand transforms vanish on $S$ will be denoted by $I(S)$.

By a compactification of a space $X$ we mean a compact Hausdorff space that contains a dense subspace that is homeomorphic to $X$.  

Given a set $\Omega$ in $\CN$, we denote the topological boundary of $\Omega$ by $\partial \Omega$.
We denote the set of positive integers by $\N$.

Finally we recall the standard method for showing that every uniform algebra of the form $R(L)$ for $L$ a compact planar set is isomorphic as a uniform algebra to 
$P(K)$ for some compact set $K$ in $\C^2$.
A theorem due to Kenneth Hoffman and Errett Bishop (and in more general form Hugo Rossi~\cite{Rossi}) asserts that for $L$ a compact set in $\C$, there is a function $g$ such that the identity function $z$ and $g$ generate $R(L)$ as a uniform algebra.  Let $\tau:L\rightarrow \C^2$ be given by $\tau(z)=\bigl(z, g(z)\bigr)$.  Then $P(\tau(L))$ is isomorphic as a uniform algebra to $R(L)$.



\section {Gleason parts in maximal ideal spaces\\ without analytic discs}\label{general-case}

In this section we prove Theorems~\ref{T:1} and~\ref{T:2}.  As discussed in the introduction, we will achieve the absence of analytic discs by using Cole's method of root extensions \cite{Cole}.  In Cole's original application of the method, he repeatedly adjoined square roots for every function in a nontrivial uniform algebra and thereby obtained a nontrivial uniform algebra with no nontrivial Gleason parts.  We will adjoin square roots only to functions that vanish on a fixed hull and thereby eliminate nontrivial Gleason parts outside that hull while preserving nontrivial Gleason parts in the hull.
Our use of Cole's method of root extensions is contained in the proof of the following theorem which will be invoked in the proofs of Theorems~\ref{T:1} and~\ref{T:2}.  Some earlier applications of Cole's method using only functions vanishing on a 
specific set were given by Joel Feinstein \cite{Feinstein1}, \cite{Feinstein2}, \cite{Feinstein3} and Feinstein and Matthew Heath \cite{FeinsteinH}.

Recall that given a uniform algebra $A$ and a subset $S$ of $\ma$, we use the notation $I(S)=\{f\in A: \hat f|S=0\}$.

\begin{theorem}\label{conjecture}
Let $A$ be a uniform algebra on a compact Hausdorff space $Y$, and let $S\subset \ma$ be a hull for $A$.  Then there exists a uniform algebra $A^C$ on a compact Hausdorff space $Y^C$ and a continuous surjective map $\pi:\mac\rightarrow \ma$ such that 
\item{\rm(i)} $\pi(Y^C)=Y$,
\item{\rm(ii)} the formula $\pi^*(f)=f\circ \pi$ defines an isometric embedding of $A$ into $A^C$,
\item{\rm(iii)} $\pi$ takes $\pi^{-1}(S)$ homeomorphically onto $S$,
\item{\rm(iv)} $\pi^{-1}(S)$ is a hull for $A^C$,
\item{\rm(v)} $\pi^*$ induces in the obvious way an isometric isomorphism of $A|S$ onto $A^C|\pi^{-1}(S)$,
\item{\rm(vi)} the set $\{f^2: f \in I(\pi^{-1}(S))\}$ is dense in $I(\pi^{-1}(S))$, and
\item{\rm(vii)} the nontrivial Gleason parts for $A^C$ are exactly the sets of the form $\pi^{-1}(S\cap \Pi)$ for $\Pi$ a Gleason part for $A$ such that $S\cap \Pi$ contains more than one point.
\hfil
\break
Furthermore, if the maximal ideal space of $A$ is metrizable, then $A^C$ can be chosen so that its maximal ideal space is metrizable as well.
\ethm

\bpf[Proof of Theorem~\ref{conjecture}]
\hfil

\emph{Step 1}:  We construct the the uniform algebra $A^C$.

Let $\Sigma_0=\ma$, and let $A_0$ denote $A$ regarded as a uniform algebra on $\Sigma_0$.  Let $S_0=S$.  We will define a sequence of uniform algebras $\{A_m\}_{m=0}^\infty$.  
First let $\sF_0$ be a dense subset of $I(S_0)$.  In case the maximal ideal space of $A$ is metrizable, choose $\sF_0$ to be countable.  (In the general case, one can take $\sF_0=I(S_0)$.)  Let $p_1:\Sigma_0\times \C^{\sF_0}\rightarrow \Sigma_0$ and $p_f:\Sigma_0\times \C^{\sF_0} \rightarrow \C$ denote the projections given by $p_1\bigl(x,(z_g)_{g\in \sF_0}\bigr)=x$ and $p_f\bigl(x,(z_g)_{g\in \sF_0}\bigr)=z_f$.  Define $\Sigma_1\subset \Sigma_0\times \C^{\sF_0}$ by
$$\Sigma_1=\{y\in \Sigma_0\times \C^{\sF_0}: (p_f(y))^2=f(p_1(y)) \hbox{\ for all } f\in \sF_0\},$$
and let $A_1$ be the uniform algebra on $\Sigma_1$ generated by 
the set of functions $\{f\circ p_1: f\in A_0\} \cup \{p_f: f\in \sF_0\}$.
On $\Sigma_1$ we have $p^2_f=f\circ p_1$ for every $f\in \sF_0$.
Set $\pi_1=p_1|\Sigma_1$, and note that $\pi_1$ is surjective.  Also there is an isometric embedding $\pi^*_1:A_0\rightarrow A_1$ given by $\pi^*_1(f)=f\circ\pi_1$.  Let $S_1=\pi^{-1}_1(S_0)$.  Then $S_1=S_0\times \{0\}^{\sF_0}$, so $\pi_1$ takes $S_1$ homeomorphically onto $S_0$.  Also observe that $S_1$ is a hull for $A_1$.  In case $\Sigma_0$ is metrizable, so is $\Sigma_1$.  By \cite[Theorem~1.6]{Cole}, the maximal ideal space of $A_1$ is $\Sigma_1$.

We next iterate the construction to obtain a sequence\break $\{(A_m,\Sigma_m, \pi_m, S_m, \sF_m)\}_{m=0}^\infty$, where each $A_m$ is a uniform algebra on $\Sigma_m$, each $\pi_m:\Sigma_m\rightarrow \Sigma_{m-1}$ is a surjective continuous map, each $S_m$ is a hull for $A_m$ such that $\pi_m$ takes $S_m$ homeomorphically onto $S_{m-1}$, and each $\sF_m$ is a dense subset of $I(S_m)$ such that for every $f\in \sF_m$ the function $f\circ \pi_{m+1}$ is the square of a function in $A_{m+1}$.  Finally we take the inverse limit of the system of uniform algebras $\{A_m\}$.  Explicitly, we set 
$$\Sigma_\omega=\Bigl\{(y_k)_{k=0}^\infty\in \textstyle\prod\limits_{k=0}^\infty \Sigma_k: \pi_{m+1}(y_{m+1})=y_m  \hbox{\ for all } m=0, 1, 2, \ldots\Bigr\},$$
and letting $q_m:\Sigma_\omega\rightarrow \Sigma_m$ be the restriction of the canonical projection $\prod_{k=0}^\infty \Sigma_k \rightarrow \Sigma_m$, we let $A_\omega$ be the uniform closure of $\bigcup_{m=0}^\infty \{h\circ q_m: h\in A_m\}$.  Note that the sets $q_m^{-1}(S_m)$, as $m$ ranges over $\N\cup \{0\}$, all coincide.  Let $S_\omega$ denote this common set.  Set $\pi=q_0$.  Then of course $S_\omega=\pi^{-1}(S)$.

By \cite[Theorem~2.3]{Cole}, the maximal ideal space of $A_\omega$ is $\Sigma_\omega$, and the inverse image under $\pi$ of the Shilov boundary for $A_0$ is the Shilov boundary for $A_\omega$.  Consequently, setting $Y^C=\pi^{-1}(Y)$ and  $A^C$ equal to the restriction algebra $A_\omega|Y^C$, we have that $A^C$ is a uniform algebra isometrically isomorphic to $A_\omega$. 

Note that in case $\Sigma_0$ is metrizable, so is $\Sigma_\omega$.

\emph{Step 2}:  Observe that properties (i)--(iv) hold.  Note also that for each $m\in \N$, the formula $q_m^*(f)=f\circ q_m$ defines an isometric embedding of $A_m$ into $A_\omega$.

\emph{Step 3}:  We introduce certain norm 1 linear operators.  

Cole \cite[Theorem~2.1]{Cole} (see also \cite[Lemma~19.3]{Stout}) proved the existence of a continuous linear operator $T_0:A_\omega \rightarrow A_0$ of norm 1 such that $T_0\circ \pi^*$ is the identity on $A_0$.
When taking the inverse limit of a system of uniform algebras, discarding the first $m$ algebras, i.e., replacing the system $\{A_k\}_{k=0}^\infty$ by $\{A_k\}_{k=m}^\infty$, yields a uniform algebra isometrically isomorphic in an obvious way to $A_\omega$.  Consequently, we get for each $m$, an analogous norm 1 operator  $T_m:A_\omega\rightarrow A_m$ such that $T_m\circ q_m^*$ is the identity on $A_m$.
It is clear from the way that $T_0$ is defined, that for each point $x\in \Sigma_m$, the value of $(T_mf)(x)$ depends only on the values of $f\in A_\omega$ on the fiber $q_m^{-1}(x)$.  (The operator $T_0$ is obtained from a sequence of operators with the $n$th operator given by averaging over the fibers of the map $\pi_n\circ \cdots\circ \pi_1$.)  The fact that $T_m\circ q_m^*$ is the identity then gives that for any $f\in A_\omega$ and any point $x\in\Sigma_m$ such that $q_m^{-1}(x)$ consists of a single point, $(T_mf)(x)=f(q_m^{-1}(x))$.  Consequently,
$q_m^*(T_mf)|S_\omega=f|S_\omega$ for every $f\in A_\omega$. 

\emph{Step 4}: For the proof of (v), note that given $f\in A$, the restriction of $\pi^*(f)=f\circ \pi$ to $\pi^{-1}(S)=S_\omega$ depends only on the restriction of $f$ to $S$, so $\pi^*$ induces a map of $A|S$ into $A^C|\pi^{-1}(S)$ that is obviously isometric.  Setting $q=0$ in the last sentence of Step~3 yields the equality 
$\pi^*(T_0f)|\pi^{-1}(S)=f|\pi^{-1}(S)$ for all $f\in A_\omega$, which establishes that the map is onto.

\emph{Step 5}:  For the proof of (vi), first note that each element of $q_m^*(\sF_m)$ has a square root in $q_{m+1}^*(A_{m+1})$; indeed, given $h\in \sF_m$, there exists $k\in A_{m+1}$ such that $k^2=h\circ \pi_{m+1}$, and then $(k\circ q_{m+1})^2=h\circ q_m$.  Furthermore, $q_m^*(\sF_m)$ is dense in $q_m^*(I(S_m))$.  Consequently, to prove~(vi), it suffices to show that $\bigcup_{m=0}^\infty q_m^*(I(S_m))$ is dense in $I(S_\omega)$.  

Fix $f\in I(S_\omega)$ and $\varepsilon>0$ arbitrary.  We will show that 
$\|q_m^*(T_m f) - f\|<\varepsilon$ for some $m\in \N$.  Since $q_m^*(T_m f)$ is in $q_m^*(I(S_m))$ by the last sentence of Step~3, this will establish the desired density.

By the definition of $A_\omega$, there exist $m\in \N$ and $h\in A_m$ such that 
\begin{equation} 
\|f-q_m^*(h)\|<\varepsilon/ 2.
\label{1} \end{equation}
Then
\[
\|(q_m^*\circ T_m)(f) - (q_m^*\circ T_m)(q_m^* h) \|<\varepsilon/2.
\]
Since $T_m\circ q_m^*$ is the identity on $A_m$, this gives
\begin{equation} 
\|(q_m^*\circ T_m)(f) - (q_m^* h) \|<\varepsilon/2.
\label{2} \end{equation}
From~(\ref{1}) and~(\ref{2}) we get
\[
\bigl\|(q_m^*\circ T_m)(f) - f\bigr\| \leq \bigl\|(q_m^*\circ T_m)(f) - q_m^*(h)\bigr\| + \bigl \|q_m^*(h) - f\bigr\|
<\varepsilon.
\]

\emph{Step 6}:  Let $\|\cdot\|_0$ and $\|\cdot\|_\omega$
denote the dual space norms on $A_0^*\supset {\mathfrak{M}_{A_0}}$ and $A_\omega^*\supset {\mathfrak{M}_{A_\omega}}$, respectively.  We show that for $x$ and $y$ in $S_\omega$,
\[
\|x-y\|_\omega= \|\pi(x)-\pi(y)\|_0.
\]
Consequently, 
two points $x$ and $y$ of $S_\omega$ lie in the same Gleason part for $A_\omega$ if and  only if $\pi(x)$ and $\pi(y)$ lie in the same Gleason part for $A_0$.

By definition,
\[
\|x-y\|_\omega=\sup\bigl\{|h(x)-h(y)|: h\in A_\omega, \|h\|\leq 1\bigr\}
\]
and 
\[
\|\pi(x)-\pi(y)\|_0=\sup\bigl\{|f(\pi(x))-f(\pi(y))|:f\in A_0, \|f\|\leq 1\bigr\}.
\]
Because $f\circ\pi$ is in $A_\omega$ and satisfies $\|f\circ \pi\|\leq 1$ for every $f$ in $A_0$ with $\|f\|\leq 1$, we have 
\[
\|x-y\|_\omega\geq \|\pi(x)-\pi(y)\|_0.
\]
But given $h$ in $A_\omega$ with $\|h\|\leq 1$, the function $T_0h$ is in $A_0$ with $\|T_0h\|\leq 1$ and satisfies
$\bigl|(T_0h)(\pi(x))- (T_0h)(\pi(y))\bigr|=|h(x)-h(y)|$ by the end of Step~3.  Thus
we also have
\[
\|x-y\|_\omega\leq \|\pi(x)-\pi(y)\|_0.
\]

\emph{Step 7}: In view of (vi), 
Lemma~\ref{A_omega-parts} below gives that each point of $\Sigma_\omega\bs S_\omega$ is a one-point Gleason part for $A_\omega$.  Property (vii) is then a consequence of the result in Step 6.
\epf

\blem\label{A_omega-parts}
Let $A$ be a uniform algebra on a compact Hausdorff space $\Sigma$, and let $S$ be a hull for $A$.  If the set $\{f^2:f\in I(S)\}$ is dense in $I(S)$, then each point of $\Sigma\bs S$ is a one-point Gleason part for $A$.
\elem

\bpf
The proof is essentially a repetition of the proof of \cite[Lemma~1.1~(i)]{Cole}.
Let $x\in \Sigma\bs S$ be arbitrary, and let $y$ be an arbitrary element of $\Sigma$ distinct from $x$.  Since $S$ is a hull for $A$, one can find a function $f$ in $I(S)$ such that $f(x)\neq 0$, $f(y)=0$, and $\|f\|<1$.  For each $n\in \N$ and $\varepsilon>0$, there exist functions $f_1,\ldots, f_n$ in $I(S)$ such that
$\|f-f_1^2\|<\varepsilon, \ldots, \|f_{n-1}-f_n^2\|<\varepsilon$.  Choosing $\varepsilon>0$ small enough, $f_n^{2^n}$ can be made arbitrarily close to $f$.   Replacing $f_n$ by $f_n-f_n(y)$, this approximation can be done with $f_n(y)=0$.  Since $|f(x)|^{2^{-n}} \rightarrow 1$, choosing $n$ large enough, $|f_n(x)|$ can be made arbitrarily close to 1.  Thus $x$ and $y$ lie in different Gleason parts by Lemma~\ref{samepart}
\epf


We are now ready to prove Theorems~\ref{T:1} and~\ref{T:2}.

\begin{proof}[Proof of Theorem~\ref{T:1}]
We treat first the case when $X$ is compact.  Recall that we denote the disc algebra by $A(D)$.  
Let 
\begin{equation*}
\begin{split}
A=\{f\in C(X \times \od): f|({\{x\}\times \od}) \in A(D)\ {\rm for\ all\ } x\in X \phantom{and}\\
 {\rm and\ } f|({X \times \{0\}})\ {\rm is\ constant} \}.
\end{split}
\end{equation*}
Then $\ma$ is the quotient space obtained from $X\times \od$ by collapsing the set $X \times \{ 0 \}$ to a point.  We will denote points of $\ma$ by their representatives in $X\times \od$.  Then the set  
$$
\Pi=\{(x,z)\in \ma: |z|<1 \}
$$
is a Gleason part for $A$. 

Let $S=\{(x, 1/2): x\in X\}$.  Then $S$ is a subspace of $\Pi$ homeomorphic to $X$, and $S$ is a hull since $S$ is the zero set of the function $g$ on $\ma$ defined by $g(x,z)=1-2z$.

The algebras $A|S$ and $C(X)$ are isometrically isomorphic.  To see this, let $\rho:X\rightarrow S$ be given by $\rho(x)=(x, 1/2)$, and note that given $f\in C(X)$, the function $l$  on $\ma$ defined by $l(y,z)=2zf(y)$ is in $A$, and $f=l\circ \rho$.  Consequently, the map $T:A \rightarrow C(X)$ given by $T(l)=l\circ\rho$
induces an isometric isomorphism of $A|S$ onto $C(X)$.

Every function in $A$ takes its maximum modulus on $X \times \partial D$.  (In uniform algebra terminology, $X \times \partial D$ is a boundary for $A$.)  Therefore, $A|(X \times \partial D)$ is a uniform algebra isometrically isomorphic to $A$.

Now we apply Theorem~\ref{conjecture} to the algebra 
$A|(X \times \partial D)$.  Letting $A^C$, $Y^C$, and $\pi$ denote the objects given by Theorem~\ref{conjecture}, set $B=A^C$, $K=Y^C$, and $P=\pi^{-1}(S)$.  Then $P$ is homeomorphic to $X$ by Theorem~\ref{conjecture}~(iii), and $P$ is a  Gleason part for $B$ by Theorem~\ref{conjecture}~(vii).  
Theorem~\ref{conjecture}~(v) gives that $B|P$ is isometrically isomorphic to $C(X)$.  Consequently, there are no analytic discs in $P$.  Furthermore, Theorem~\ref{conjecture} (vii) gives that each point of $\mb\bs P$ is a one-point Gleason part for $B$.  It follows that there are no analytic discs in $\mb$.  
Note that $P$ is contained in $\mb\bs K$ since  $S$ is disjoint from $X \times \partial D$. 
This completes the proof of the theorem in the case when $X$ is compact.


We now treat the more complicated case when $X$ is noncompact.  
Let $\tX$ be a compactification of $X$, and let $\Lambda=\tX \setminus X$. 
Let $Z=\oD^\Lambda$ be the product of $\Lambda$ copies of the closed unit disc, and let $A_\Lambda$ denote the tensor product of $\Lambda$ copies of the disc algebra $A(D)$.  
Then $\mathfrak{M}_{A_\Lambda}=Z$. Denote by $\theta$ the point in $Z$ all of whose coordinates are zero, and let $\Pi_{\theta}$ denote the Gleason part containing 
$\theta$.  Then as shown in \cite{Garnett} (or see \cite[p.~193]{Stout})

\begin{equation*}
\begin{split}
\Pi_{\theta}=\{z=(z_x)_{x\in \Lambda} \in Z: {\rm there\ exists\ } a<1\ {\rm such\ that}\ 
|z_x|<a\phantom{and and a}\\ {\rm for\ all\ } x\in \Lambda\}.
\end{split}
\end{equation*}
Let 
\begin{equation*}
\begin{split}
A=\{f\in C(\tX \times Z): f|({\{x\}\times Z}) \in A_\Lambda\ {\rm for\ all\ } x\in \tX\ 
{\rm and\ }\phantom{and}\\ f|({\tX \times \{ \theta \}})\ {\rm is\ constant} \}.
\end{split}
\end{equation*}
Then $\ma$ is the quotient space obtained from $\tX \times Z$ by collapsing the set $\tX \times \{ \theta \}$ to a point.  We will denote points of $\ma$ by their representatives in $\tX \times Z$.  Then the set  
$$
\Pi=\{(x,z)\in \ma:z \in \Pi_{\theta} \}
$$
is a Gleason part for $A$. 

Write $X=\bigcup_{n=1}^{\infty}X_n$, where $X_1\subset X_2\subset \dots $ and each $X_n$ is compact. Given $y\in\Lambda=\tX \setminus X$, there is a function $h_y \in C(\tX)$  with range contained in $[\frac{1}{2},1]$ such that $h_y(y)=1$ and $\|h_y\|_{X_n}\leq 1-2^{-n}$ for each $n\in \N$. Define a map $\rho : \tX \rightarrow \ma$ by $\rho (x)=(x,H(x))$, where $H(x)=(h_y(x))_{y\in \Lambda}$.  The map $\rho$ is an embedding of $\tX$ into $\ma$. Set $S=\rho(\tX)$, and note that $S \cap \Pi=\rho(X)$.

The set $S$ is a hull in $\ma$; if for each $y\in \Lambda$ we define $g_y$ on $\ma$ by
$$
g_y(x,z)=\bigl(h_y(x)-z _y\bigr)\big/\bigl(3h_y(x)-z_y\bigr),
$$
then each $g_y$ is in $A$ and\vadjust{\kern 4pt} 
$S=\bigcap\limits_{y\in \Lambda} g_y^{-1}(0)$. 

The algebras $A|(S\cap \Pi)$ and $C(\tX)$ are isometrically isometric; indeed, given 
$f\in C(\tX)$, and $y\in \Lambda$, the function $l$ on $\ma$ given by
$$
l(x, z)=z_y f(x) / h_y(x)
$$
is in $A$, and $f=l\circ \rho$.  Consequently, the map $T:A \rightarrow C(\tX)$ given by $T(l)=l\circ\rho$
induces an isometric isomorphism of $A|(S\cap \Pi)$ onto $C(\tX)$.

We claim that distinct points of $S\setminus \Pi$ lie in distinct Gleason parts.  To see this, let $a$ and $b$ be distinct points of $\tX \setminus X$, and choose a function
$f\in C(\tX)$ such that $f(a)=1, f(b)=0$, and $\|f\|=1$. Then the function on $\ma$ given by
$h(y,z)=z_a f(y)$ belongs to $A$ and satisfies $h(\rho(a))=1, h(\rho(b))=0$, and 
$\|h\|=1$.  Thus $\rho(a)$ and $\rho(b)$ lie in different Gleason parts by Lemma~\ref{samepart}.

Note that every function in $A_\Lambda$ takes its maximum modulus on $(\partial D)^{\Lambda}$, and consequently, every function in $A$ takes its maximum modulus on $\tX \times (\partial D)^{\Lambda}$.  Therefore, $A|(\tX \times (\partial D)^{\Lambda})$ is a uniform algebra isometrically isomorphic to $A$.

Now we apply Theorem~\ref{conjecture} to the algebra 
$A|(\tX \times (\partial D)^{\Lambda})$.  Letting $A^C$, $Y^C$, and $\pi$ denote the objects given by Theorem~\ref{conjecture}, set $B=A^C$, $K=Y^C$, and $P=\pi^{-1}(S\cap \Pi)$.  
Then by Theorem~\ref{conjecture}, $P$ is a  Gleason part for $B$ homeomorphic to $X$, and 
$B|P$ is isometrically isomorphic to $C(\tX)$.  Consequently, there are no analytic discs in $P$.  Furthermore, Theorem~\ref{conjecture}~(vii), together with the fact that distinct points of $S\bs \Pi$ lie in distinct Gleason parts for $A$, gives that each point of $\mb\bs P$ is a one-point Gleason part for $B$.  It follows that there are no analytic discs in $\mb$.  Note that $P$ is contained in $\mb\bs K$, since $S\cap\Pi$ is disjoint from $\tX \times (\partial D)^{\Lambda}$.
\end{proof}




\begin{proof}[Proof of Theorem~\ref{T:2}]
The case when $X$ is compact has really already been proven in the proof of Theorem~\ref{T:1} above, for in that case the uniform algebra $B$ constructed there is defined on a metrizable space in view of the last sentence of 
Theorem~\ref{conjecture}.
In the case when $X$ is noncompact, first note that since $X$ is separable and metrizable,  $X$ can be embeded in $[0,1]^\omega$ (by the standard proof of the Urysohn metrization theorem) and thus does have a metrizable compactification. Let $\tX$ be any such compactification of $X$. 
Now the only change needed in the construction given in the proof of the noncompact case of Theorem~\ref{T:1} is that we must show that we can replace \vadjust{\kern 1pt}the possibly \emph{uncountable} index set 
$\Lambda=\tX \bs X$ by a suitable \emph{countable} subset of $\tX\bs X$.

Write $X=\bigcup_{n=1}^{\infty}X_n$, where $X_1\subset X_2\subset \dots $ and each $X_n$ is compact. Given $y\in \tX \setminus X$, there is a function $h_y \in C(\tX)$  with range contained in $[\frac{1}{2},1]$ such that $h_y(y)=1$ and $\|h_y\|_{X_n}\leq 1-2^{-n}$ for each $n\in \N$. For each $m\in \N$, the collection of sets
$$
\Bigl\{ \{h_y>1-\tfrac{1}{m}\}: y\in \tX \setminus X \Bigr\}
$$
is an open cover of $\tX \setminus X$.
Choose a countable subcover, and let $\{y_k^m: k\in \N\}$ be the set of corresponding $y$'s. 
The proof can now be carried out by repeating the argument given in the noncompact case of Theorem~\ref{T:1} setting $\Lambda=\{y_k^m: k,m\in \N\}$ instead of setting $\Lambda=\tX\bs X$.  Metrizability of $\ma$ is assured by the following lemma, and then metrizability of the space $K=Y^C$ obtained in the final paragraph of the proof is given by the last sentence of Theorem~\ref{conjecture}.
\end{proof}

\begin{lemma}\label{metrizable}
Let $X$
be a compact metrizable space and $Y$ be a quotient space of $X$.  If $Y$ is Hausdorff, then $Y$ is metrizable.
\end{lemma}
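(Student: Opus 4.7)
My plan is to deduce metrizability of $Y$ from the classical fact that a compact Hausdorff space is metrizable as soon as it embeds in the Hilbert cube $[0,1]^\omega$, and to manufacture such an embedding from a countable separating family of continuous functions on $Y$.

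First I would observe that $Y$ is compact, being the continuous image of the compact space $X$ under the quotient map $q:X\to Y$. Combined with the hypothesis that $Y$ is Hausdorff, this makes $Y$ a compact Hausdorff space, and in particular normal.

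Next I would produce a countable separating family in $C(Y)$. Since $X$ is compact metrizable, $C(X)$ is separable in the supremum norm (for instance by Stone--Weierstrass applied to a countable dense $\Q$-algebra built from the coordinate functions of a metric embedding of $X$ into a Hilbert cube). The pullback $q^{*}:C(Y)\to C(X)$, $f\mapsto f\circ q$, is an isometric linear embedding because $q$ is a continuous surjection, so $C(Y)$ inherits separability from $C(X)$. Let $\{f_n\}_{n\in\N}$ be a countable dense subset of $C(Y)$, normalized so that $\|f_n\|_Y\leq 1$.

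Now I would define $\Phi:Y\to [0,1]^\omega$ (or a suitable complex Hilbert cube) by $\Phi(y)=(f_n(y))_{n\in\N}$. The map $\Phi$ is continuous by construction. Because $Y$ is compact Hausdorff, Urysohn's lemma gives that $C(Y)$ separates the points of $Y$; since $\{f_n\}$ is dense in $C(Y)$, it too separates points, so $\Phi$ is injective. A continuous injection from a compact space to a Hausdorff space is an embedding, hence $\Phi$ identifies $Y$ with a subspace of the metrizable space $[0,1]^\omega$, and therefore $Y$ is metrizable. The only mild subtlety is the passage from separability of $C(Y)$ to separation of points by a countable family, but this is immediate once one invokes Urysohn in the compact Hausdorff space $Y$.
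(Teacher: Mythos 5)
Your proof is correct. The paper does not actually supply an argument for this lemma---it only cites Jameson's book and a lemma of Feinstein--Izzo---and your argument (compactness and Hausdorffness of $Y$, separability of $C(Y)$ via the isometric pullback $q^{*}:C(Y)\to C(X)$ into the separable space $C(X)$, and then an embedding of $Y$ into a Hilbert cube by a countable separating family) is precisely the standard short proof given in the cited reference. The one point you flag, passing from separability of $C(Y)$ to a countable point-separating family, is handled correctly: Urysohn's lemma gives separation by $C(Y)$, and a dense countable subfamily inherits it.
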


This lemma is essentially~\cite[Corollary~26.16]{Jameson}, and a short proof can be found in \cite[Lemma~2.5]{FeinsteinI}.




\section{Gleason parts in polynomial hulls}\label{poly-hulls}

In this section we prove Theorem~\ref{T:3} which shows that every locally compact subspace of a Euclidean space  occurs as a Gleason part in a polynomial hull without analytic discs.  In the proofs of Theorem~\ref{T:1} 
and~\ref{T:2}, we initially constructed, as in Garnett's proof, a uniform algebra $A$ whose maximal ideal space contains analytic discs, and we then eliminated the analytic discs by passing to another uniform algebra $B$ using the Cole construction.  As mentioned in the introduction, we do not see a way to carry out this passage from $A$ to $B$ using the Cole construction so as to obtain a finitely generated uniform algebra.  Therefore, in the proof of 
Theorem~\ref{T:3}, we change the construction of the initial uniform algebra 
$A$; in place of the tensor product of disc algebras used in Garnett's construction, we use a certain special \lq\lq Swiss chesse algebra\rq\rq.  In this way we obtain at the outset a uniform algebra $A$ whose maximal ideal space contains no analytic discs.  We then carry out the passage from $A$ to the final desired uniform algebra $B$ via a modification of the method used by Garnett.  Both the construction of the initial uniform algebra $A$ and the passage from $A$ to $B$ rely heavily on the first author's papers~\cite{Izzo} and~\cite{Izzo3}.

Following Garth Dales and Feinstein~\cite{DalesF}, we will say that a uniform algebra 
$A$ has dense invertibles if the invertible elements of $A$ are dense in $A$.  Dales and Feinstein~\cite{DalesF} constructed a compact set $X$ in $\C^2$ such that 
$\h X$ is strictly larger than $X$ and yet the uniform algebra $P(X)$ has dense invertibles.  As noted in \cite{DalesF} the condition that $P(X)$ has dense invertibles is strictly stronger than the condition that $\h X$ contains no analytic discs.  We will also use the observation in \cite{DalesF} that when $P(X)$
has dense invertibles, the polynomial and rational hulls of $X$ coincide. 
For our construction we will need a special $P(X)$ with dense invertibles provided by the following result of the first author.

\bthm \label{Cor8.3} \cite[Corollary 8.3]{Izzo3}
Let $\Omega \subset \C^N$ {\rm(}$N\geq2${\rm)} be any bounded open set, and let $x_0\in \Omega$. Then there exists a compact set $X\subset \partial \Omega$ such that $x_0$ is in $\hh X$ and is a one-point Gleason part for $P(X)$, and $P(X)$ has dense invertibles.
\ethm

In Garnett's construction, the passage from the initial uniform algebra $A$ to the desired uniform algebra $B$ uses the big disc algebra.  Applying the above theorem, we obtain in the following result a suitable replacement for the big disc algebra in our context.

\bthm \label{U_a}
There exists a compact set $X\subset \partial D^2$ such that 
\item{\rm(i)} $0$ is in $\h X$ and is a one-point Gleason part for $P(X)$,
\item{\rm(ii)} $P(X)$ has dense invertibles, and
\item{\rm(iii)} there is a subset $J$ of $X$ such that $0\notin\h J$
and for every representing measure $\mu$ for $0$ as a functional on $P(X)$, we have
$\mu (J)>0$.
\ethm

\begin{proof}
Applying Theorem~\ref{Cor8.3} with $N=2, \Omega=D^2$, and $x_0=0$, we get a compact set $X\subset \partial D^2$ such that (i) and (ii) hold.  
Now let $\mu$ be a representing measure for $0$.
Set $J=X \cap\bigl((\pD)\times \oD\bigr)$ and $K=X\cap \bigl(\oD\times (\pD)\bigr)$.  Because $P(X)$ has dense invertibles, the same is true of $P(J)$ and $P(K)$.  Thus $\h J=\hr J$ and $\h K=\hr K$.  Furthermore the sets $\pD \times \oD$ and $\oD \times \pD$ are each rationally convex, so we conclude that $\h J \subset \pD \times \oD$ and $\h K\subset \oD \times \pD$.  In particular neither $\h J$ nor $\h K$ contains $0$.  Consequently, the representing measure $\mu$ is supported on neither $J$ nor $K$.  Since $X=J \cup K$, it follows that each of $J$ and $K$ has positive $\mu$-measure.
\end{proof}

The next theorem is a variation on \cite[Theorem~1]{Garnett} of Garnett. 

\bthm\label{basic}
Let $A$ be a uniform algebra generated by $n$ elements
such that $\ma$ contains no
analytic discs.  Let $S\subset \ma$ be a hull, and let $\Pi$ be a Gleason part for $A$ with $S\cap \Pi \neq \emptyset$. Then there exists a uniform algebra $B$ on a compact Hausdorff space $Y$ such that 
\item{\rm(i)} $B$ is generated by $n+2$ elements,
\item{\rm(ii)} $\mb$ contains no analytic discs,
\item{\rm(iii)} there is a Gleason part $Q$ for $B$ homeomorphic to $S\cap \Pi$ with $Q\subset \mb\bs Y$, and
\item{\rm(iv)} $B|Q$ is isometrically isometric to an algebra lying between $A|(S\cap \Pi)$ and its uniform closure in the space of bounded continuous functions on $S\cap \Pi$.
\ethm

\begin{proof}
Let $X$ and $J$ be as in Theorem~\ref{U_a}.  Let $g_1,\dots, g_n$ be generators for $A$.  The algebra $P(X)\otimes A$, regarded as a uniform algebra on its maximal ideal space ${\mathfrak{M}_{P(X)\otimes A}}=\h X\times \ma$, is generated by the 
$n+2$ functions $z_1\circ \pi_1, z_2\circ \pi_1, g_1\circ \pi_2, \dots , g_n\circ \pi_2$, where $\pi_1$ and $\pi_2$ are the projections of $\h X \times \ma$ onto the first and second factors, respectively.  Set 
$$
Y=(J\times \ma)\cup (X \times S),
$$ 
and let $B$ be the closure in $C(Y)$ of the restriction  algebra 
$(P(X)\otimes A)|Y$. 

The maximal ideal space $\mb$ of $B$ is the $(P(X)\otimes A)$-convex hull $\h Y$ of $Y$ defined by 
\[
\h Y = \{ x\in \h X \times \ma: |f(x)| \leq \|f\|_Y  \ {\rm for\ all\ } f\in P(X)\otimes A\}.
\]
We now show that 
\begin{equation}\label{hatY}
\h Y = (\h J\times \ma)\cup (\h X \times S).
\end{equation}
First note that for each function $f\in P(X)\otimes A$ and $s\in \ma$, the function on $\h X$ given by $x\mapsto f(x,s)$ is in $P(X)$.  It follows that $\h Y$ contains the set on the right hand side of (\ref{hatY}).  
Now suppose $x=(x_1,x_2)$ is in $(\h X \times \ma)$ but is not in the set on the right hand side of (\ref{hatY}).  Then $x_1\notin \h J$ and $x_2\notin S$.  Since $S$ is a hull, there exists $g\in A$
such that $g|S=0$ and $g(x_2)=1$.  Since $x_1\notin \h J$, there exists 
$f\in P(X)$ such that $f(x_1)=1$ and $\|f\|_J<1$.  Replacing $f$ by a sufficiently high power of $f$, we may assume that $\|f\|_J<\|g\|^{-1}$. 
Then the function $h$ on $\h X\times \ma$ defined by 
$h(u_1,u_2)=f(u_1)g(u_2)$ is in $P(X)\otimes A$ and satisfies that $h(x)=1$ and $\|h\|_Y<1$.  Thus $x$ does not belong to $\h Y$. 

Note that $B$ is generated by the $n+2$ functions $(f_1\circ \pi_1)|Y, (f_2\circ \pi_1)|Y, (g_1\circ \pi_2)|Y, \dots , (g_n\circ \pi_2)|Y$. Also $\mb$ 
contains \vadjust{\kern 1pt}no analytic discs, since neither 
$\h X$ nor $\ma$ contains analytic discs.

Note that the set 
$$
Q=\{0\}\times (S \cap \Pi),
$$
which is clearly homeomorphic to $S\cap \Pi$, is contained in $\h Y \bs Y = \mb\bs Y$.  We show that it is a Gleason part for $B$.  For $s\in S\cap \Pi$, let $p_s=(0,s)\in Q$.  Suppose
$x=(x_1,x_2)\in \h Y\setminus Q$.  If $x_1\neq 0$, then since 0 is a one-point Gleason part for $P(X)$, using functions from $P(X)$ we get that $x$ and $p_s$ lie in different Gleason parts.  Similarly, if $x_2\notin \Pi$, then again $x$ and $p_s$ lie in different Gleason parts.  If finally $x_2\notin S$, then we see from (\ref{hatY}) that $x_1\in \h J$, so $x_1\neq 0$, which we already noted implies that $x$ and $p_s$ lie in different Gleason parts. Hence $Q$ is a union of Gleason parts. To show that $Q$ \emph{is} a Gleason part, let $\mu$ be a representing measure on $X$ for the functional evaluation at 0 on $P(X)$.  For $s\in S\cap \Pi$ and $g\in B$, we have
$$
g(p_s)=g(0,s)=\int_X g(x,s)d\mu(x).
$$
For $s,t\in S\cap \Pi$ and $g\in B$ such that $\|g\| \leq 1$, we have 
\begin{equation}\label{inequality}
\begin{split}
\bigl |g(p_s)-g(p_t)\bigr |&\leq \int_X \bigl |g(x,s)-g(x,t)\bigr |d\mu (x)\\ 
&\leq 2\mu (X\setminus J)+ \int_J\bigl |g(x,s)-g(x,t)\bigr |d\mu(x).
\end{split}
\end{equation}
For $x\in J$ we have $\{x\}\times \ma \subset Y$, and thus the function on $\ma$ given by $y\mapsto g(x,y)$ is in $A$ and of norm at most $1$. Since $s$ and $t$ are in the common Gleason part $\Pi$, there is a constant $c<2$, independent of $x$, such that $|g(x,s)-g(x,t) |<c$. Since $\mu (X)=1$ and $\mu (J)>0$, we get from (\ref{inequality}) that 
$|g(p_s)-g(p_t)|< 2\mu (X\setminus J)+ c\mu(J) <2$, 
and hence $p_s$ and $p_t$ lie in a common Gleason part.  Thus $Q$ is a Gleason part.

Finally, we note that $(P(X)\otimes A)|Q$ is isometrically isometric to $A|(S\cap \Pi)$ since for each function $f \in P(X)\otimes A$, the function on $\ma$ given by $y\mapsto f(0,y)$ is in $A$.  Assertion (iv) of the theorem follows.
\end{proof} 

Give $a\in \C$ and $r>0$, we will denote 
the open disc of radius $r$ with center $a$ by $D(a,r)$ and the corresponding closed disc by $\od(a,r)$. 
For $X\subset \C^N$ a compact set, and for $\Omega \subset \C^N$ an open set each of which contains the origin, we will denote by  $\mathscr{B}_0(X)$ the set of rational functions $f$ holomorphic on a neighborhood of $X$ such that $f(0)=0$ and $\|f\|_X\leq 1$, and by $\mathscr{B}_0(\Omega)$ the set of functions $f$ holomorphic on $\Omega$ such that $f(0)=0$ and $\|f\|_{\Omega} \leq 1$. 
The next two lemmas are taken from the first author's papers \cite{Izzo} and \cite{Izzo4}. 

\begin{lemma}\cite[Lemma~3.4]{Izzo4} \label{R}
Suppose $K$ is a compact set containing the origin and contained in an open set $\Omega\subset \CN$.  Then there exists an $R$ with $0<R<1$ such that $\|f\|_K\leq R$ for all $f\in \Bzero \Omega$ if and only if $K$ is contained in the component of $\Omega$ that contains the origin.
\end{lemma}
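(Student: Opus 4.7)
The plan is to prove the two directions separately, with the forward direction being a short contrapositive construction and the reverse direction resting on a normal families / maximum modulus argument.

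For the ``only if'' direction, I will prove the contrapositive: assume $K$ is not contained in the component $\Omega_0$ of $\Omega$ that contains the origin, and exhibit a single function $f\in\Bzero\Omega$ with $\|f\|_K=1$, which prevents any uniform $R<1$. Choose a point $p\in K\setminus\Omega_0$ and let $\Omega_1$ be the component of $\Omega$ containing $p$; by construction $\Omega_1\neq \Omega_0$. Since $\Omega_0$ and $\Omega_1$ are disjoint open subsets of $\Omega$, the locally constant function $f$ equal to $1$ on $\Omega_1$ and $0$ on $\Omega\setminus \Omega_1$ is holomorphic on $\Omega$, satisfies $f(0)=0$ and $\|f\|_\Omega=1$, and so lies in $\Bzero\Omega$; but $|f(p)|=1$, so $\|f\|_K=1$.

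For the ``if'' direction, assume $K\subset \Omega_0$. Restricting elements of $\Bzero\Omega$ to $\Omega_0$ does not increase norms, so it suffices to prove the statement with $\Omega$ replaced by the connected open set $\Omega_0$; in other words I may assume $\Omega$ is connected. Set $R=\sup\{\|f\|_K:f\in\Bzero\Omega\}$; clearly $R\leq 1$, and the task is to rule out $R=1$. Suppose, toward a contradiction, that $R=1$. Choose $f_n\in\Bzero\Omega$ with $\|f_n\|_K\to 1$, and by compactness of $K$ pick $x_n\in K$ with $|f_n(x_n)|=\|f_n\|_K$ and $x_n\to x\in K$.

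The family $\Bzero\Omega$ is uniformly bounded on $\Omega$ by $1$, hence normal by Montel's theorem, so after passing to a subsequence $f_n\to f$ locally uniformly on $\Omega$ with $f$ holomorphic, $f(0)=0$, and $\|f\|_\Omega\leq 1$. Since $f_n(x_n)\to f(x)$ (local uniform convergence plus $x_n\to x$), we get $|f(x)|=1$. But then $|f|$ attains the value $\|f\|_\Omega$ at an interior point of the connected open set $\Omega$, so by the maximum modulus principle $f$ is constant on $\Omega$; combined with $f(0)=0$ this forces $f\equiv 0$, contradicting $|f(x)|=1$. Therefore $R<1$, which is exactly the desired conclusion. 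The only subtle point is verifying that the constructed locally constant $f$ in the first direction genuinely lies in $\Bzero\Omega$, which is immediate because holomorphy is a local condition and the components of $\Omega$ are open; the rest is routine.
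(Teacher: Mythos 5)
Your proof is correct. Note that this paper does not prove Lemma~\ref{R} at all --- it is quoted from \cite[Lemma~3.4]{Izzo4} --- so there is no in-text argument to compare against; your two-step argument (a locally constant function killing the ``only if'' direction, and Montel plus the maximum modulus principle on the connected component for the ``if'' direction) is the standard and essentially the intended one, with only the trivial remark missing that if the supremum is $0$ one should still take $R$ strictly positive.
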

\begin{lemma}\cite[Lemma~5.3]{Izzo}\label{R+epsilon}
Let $K$ be a compact set containing the origin and contained in an open set $\Omega\subset \C$, let $a\in \Omega\setminus K$, and let $\varepsilon>0$ be given.  Let $0<R<1$, and suppose that $\|f\|_K\leq R$ for all $f\in \Bzero \Omega$.
Then there exists an $r>0$ such that $\odisc ar{} \subset \Omega\setminus K$ and $\|f\|_K\leq R+\varepsilon$ for all $f\in \Bzero {\Omega\setminus \odisc ar{}}$.
\end{lemma}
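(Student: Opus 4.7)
My strategy is to bound an arbitrary $f\in\Bzero{\Omega\setminus\overline{D}(a,r)}$ on $K$ by writing $f$ as a piece that extends holomorphically across $\overline{D}(a,r)$ plus a small corrective term, and then invoking the hypothesis on the extended piece. Let $V$ be the component of $\Omega$ containing the origin; Lemma~\ref{R} forces $K\subset V$, so $|a|>0$ and $d(K,a)>0$. Any $h\in\Bzero V$ extends by zero on the remaining components of $\Omega$ to an element of $\Bzero\Omega$, so the hypothesis $\|\cdot\|_K\leq R$ passes to $\Bzero V$.

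If $a\notin V$, then for all sufficiently small $r$ the disc $\overline{D}(a,r)$ lies in a component of $\Omega$ disjoint from $V$, hence $V\subset\Omega\setminus\overline{D}(a,r)$ and the restriction $f|_V$ is already in $\Bzero V$, giving $\|f\|_K\leq R$ directly. The substantive case is $a\in V$. Fix $R_0\in(0,|a|)$ with $\overline{D}(a,R_0)\subset V\setminus K$, and restrict attention to $r\in(0,R_0)$. On the annulus $D(a,R_0)\setminus\overline{D}(a,r)$ I expand $f$ in a Laurent series as $f=\phi+\psi$, where the principal part $\phi(z)=\sum_{n\geq 1}c_{-n}(z-a)^{-n}$ is holomorphic on $\C\setminus\overline{D}(a,r)$ (with $\phi(\infty)=0$) and the regular part $\psi$ is holomorphic on $D(a,R_0)$. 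The Cauchy formula for the Laurent coefficients, combined with $|f|\leq 1$ on circles $|z-a|=s$ with $s\searrow r$, yields $|c_{-n}|\leq r^n$, and summing a geometric series gives $|\phi(z)|\leq r/(\rho-r)$ whenever $|z-a|\geq\rho>r$. In particular, $|\phi(0)|\leq r/(|a|-r)$ and $\|\phi\|_K\leq r/(d(K,a)-r)$, both tending to $0$ as $r\to 0$.

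Next I define a single holomorphic function $g$ on $V$ by setting $g=f-\phi$ on $V\setminus\overline{D}(a,r)$ and $g=\psi$ on $D(a,R_0)$; these agree on the annular overlap. The maximum principle applied to $\psi$ on $\overline{D}(a,R_0)$ (using $\psi=f-\phi$ on the boundary circle) gives $\|g\|_V\leq 1+r/(R_0-r)$. Normalising to $g_0=g-g(0)$ produces $g_0(0)=0$ and $\|g_0\|_V\leq M(r):=1+r/(R_0-r)+r/(|a|-r)$, with $M(r)\to 1$ as $r\to 0^+$. Hence $g_0/M(r)\in\Bzero V$, and the hypothesis yields $\|g_0\|_K\leq M(r)R$.

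Combining $|f|\leq|g_0|+|g(0)|+|\phi|$ pointwise on $K\subset V\setminus\overline{D}(a,r)$ with the estimates above produces
\[
\|f\|_K\leq M(r)R+\frac{r}{|a|-r}+\frac{r}{d(K,a)-r},
\]
whose right-hand side tends to $R$ as $r\to 0^+$. Choosing $r$ small enough that this quantity falls below $R+\varepsilon$ and also ensures $\overline{D}(a,r)\subset\Omega\setminus K$ completes the proof. The delicate point is that the normalisation error $|g(0)|$ and the residual $\|\phi\|_K$ must both shrink uniformly in $f$; this uniformity is exactly what the Cauchy-based estimate $|c_{-n}|\leq r^n$ delivers.
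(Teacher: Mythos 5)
Your argument is correct, and it is worth noting that the paper itself does not prove this lemma --- it is quoted from \cite[Lemma~5.3]{Izzo} --- so there is no in-text proof to compare against. The proof in the cited source is a soft compactness argument: assuming the conclusion fails for a sequence $r_j\to 0$, one takes functions $f_j\in\Bzero{\Omega\setminus\overline{D}(a,r_j)}$ violating the bound, extracts a locally uniformly convergent subsequence on $\Omega\setminus\{a\}$ by Montel's theorem, and removes the singularity at $a$ (the limit being bounded) to contradict the hypothesis on $\Bzero{\Omega}$. Your route is instead direct and quantitative: the Laurent splitting $f=\phi+\psi$ on an annulus about $a$, the coefficient bound $|c_{-n}|\leq r^n$ giving $\|\phi\|\leq r/(\rho-r)$ off $\overline{D}(a,\rho)$, and the glued function $g$ on the component $V$ of $\Omega$ containing $0$ (with the zero-extension remark correctly reducing the hypothesis to $\Bzero{V}$, and the case $a\notin V$ correctly dispatched). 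The resulting bound $\|f\|_K\leq M(r)R+r/(|a|-r)+r/(d(K,a)-r)$ depends only on $r$ and the geometry, not on $f$, which is exactly the uniformity needed; the only point you pass over lightly --- continuity of $\psi$ up to the circle $|z-a|=R_0$ for the maximum principle --- is harmless since $g$ is holomorphic on all of $V\supset\overline{D}(a,R_0)$. What your approach buys is an explicit rate ($r$ can be taken comparable to $\varepsilon$ times the relevant distances), at the cost of a longer computation; the normal-families proof is shorter but nonconstructive.
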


By a \textit{Swiss cheese} we shall mean a 
compact set $L$ obtained from the closed unit disc 
$\od\subset \C$ by deleting a sequence of open discs $\{D_j\}_{j=1}^\infty$ with radii 
$\{r_j\}_{j=1}^\infty$ and contained in $D$ such that the closures of the $D_j$ are disjoint, 
$\sum_{j=1}^\infty r_j<\infty$, and the resulting set $L=\oD\setminus (\bigcup_{j=1}^{\infty}D_j)$ has no interior.
It is well known that every Swiss cheese $L$ satisfies $R(L)\neq C(L)$.

The proof of the next result is reminiscent of, though much simpler than, the proof of \cite[Theorem~1.5]{Izzo}.

\bthm\label{swiss}
There exists a Swiss cheese $L$ such that $[0,1)$ is contained in a single Gleason part for $R(L)$.
\ethm

\begin{proof}
Set $D'=D\bs [0,1)$ and $I_n=[0, 1-2^{-n}]$ for each $n\in \N$.  

Choose a sequence $\{ \alpha _j\}$ that is dense in $D'$.  Set $R_1=1/2$.  By the Schwarz Lemma $\|f\|_{L_1}\leq R_1$, for every $f\in \mathscr{B}_0(D)$. 
Set $a_1=\alpha_1$.
By Lemma~\ref{R+epsilon}, there exists $0<r_1<1/2$ such that $\odisc ar1\subset D'$, the boundary of $\disc ar1$ is disjoint form the set $\{\alpha_j\}$, and
$$
\|f\|_{I_1}\leq R_1 + \tfrac 14 (1-R_1) \quad \forallBzero {D\setminus \odisc ar1}.
$$
By Lemma~\ref{R}, there exists $0<R_2<1$ such that
$$
\|f\|_{I_2}\leq R_2 \quad \forallBzero {D\setminus \odisc ar1}.
$$
Now let $a_2$ be the first term of the sequence $\{\alpha_j\}$ not belonging to $\disc ar1$.  
By Lemma~\ref{R+epsilon}, there exists $0<r_2<1/4$ such that $\odisc ar2\subset D'\bs \odisc ar1$, the boundary of $\disc ar2$ is disjoint from the set $\{\alpha_j\}$, and 
$$
\|f\|_{I_1}\leq R_1 + (\textfrac 14 + \textfrac 18) (1-R_1) \quad \forallBzero {D\setminus [ \odisc ar1 \cup \odisc ar2]}
$$
and
$$
\|f\|_{I_2}\leq R_2 + \textfrac 14 (1-R_2) \quad \forallBzero {D\setminus [ \odisc ar1 \cup \odisc ar2]}
$$

We continue by induction.
Suppose that for some $k\geq 2$, we have chosen $a_1,\ldots, a_k$,  $r_1,\ldots, r_k$, and $R_1,\ldots, R_k$ such that for all $1\leq l\leq k$ and all $1\leq n\leq m\leq k$
the following conditions hold:
\begin{enumerate}
\item[(i)]  $a_l$ is the first term of $\{\alpha_j\}$ not in $\disc ar1\cup\cdots\cup \disc ar{l-1}$ 
\item[(ii)] $0<r_l<1/2^l$
\item[(iii)] $\odisc aul\subset D'\setminus [\odisc ar1 \cup\cdots\cup \odisc ar{l-1}]$
\item[(iv)] $\partial \disc arl$ is disjoint from the set $\{\alpha_j\}$
\item[(v)] we have the inequality
$$\|f\|_{I_n}\leq R_n +(\textfrac 14 + \textfrac 18 + \cdots + \textfrac 1{2^{m-n+2}})(1-R_n) \quad \forallBzero {D\setminus \textstyle\bigcup\limits_{j=1}^m \odisc arj}.$$
\end{enumerate}
Let $a_{k+1}$ be the first term of $\{\alpha_j\}$ not in $\disc ar1 \cup\cdots\cup \disc ark$.  By Lemma~\ref{R}, there exists $0<R_{k+1}<1$ such that
$$
\|f\|_{I_k}\leq R_k  \quad \forallBzero {D\setminus \textstyle\bigcup\limits_{j=1}^k \odisc arj}.
$$
By Lemma~\ref{R+epsilon}, there exists $0<r_{k+1}<1/2^{k+1}$ such that 
conditions (i)--(v) continue to hold when, in the restriction $1\leq n\leq m\leq k$,  we replace $k$ by $k+1$. 
Thus the induction can proceed.

We conclude that there are sequences $\{a_j\}$, $\{r_j\}$, and $\{R_j\}$ such that for all $1\leq l$ and all $1\leq n\leq m$ conditions (i)--(v) hold.  

Set $L=\od\bs \bigcup_{j=1}^\infty \disc arj$.

Now let $z\in [0,1)$ be arbitrary.  Then $z\in I_n$ for some $n\in \N$.  
Given $g\in \Bzero L$, there is some $m\geq n$ such that $g\in \Bzero {D\setminus \bigcup_{j=1}^m \odisc arj}$.  By condition (v),
$$|g(z)|\leq R_n +(\textfrac 14 + \textfrac 18 + \cdots + \textfrac 1{2^{m-n+2}})(1-R_n).$$  Therefore,
$$\sup_{f\in \Bzero L} |f(z)| \leq R_n + \textfrac 12 (1-R_n) <1.$$
Thus $z$ is in the same Gleason part for $R(L)$ as $0$ by Lemma~\ref{samepart}.
\end{proof}

\begin{lemma}\label{non-unital}
Suppose that $A$ is a uniform algebra generated by $m$ functions on a compact Hausdorff space $X$, that $C$ is a uniform algebra generated by $n$ functions on a compact Hausdorff space $Y$, and that $p\in Y$.  Then
$$
\{f\in A\otimes C: f|({X\times\{p\}})\ {\rm is\ constant} \}
$$
is a unital Banach algebra generated by $mn$ functions.
\end{lemma}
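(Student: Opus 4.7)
Plan: The natural candidates for the $mn$ generators are the elementary tensors $\phi_{ij}(x,y):=a_i(x)c_j(y)$, provided one first arranges that each generator of $C$ vanishes at $p$. I would begin by replacing each $c_j$ with $c_j - c_j(p)$: since $C$ already contains the constants, this substitution does not change the closed unital algebra generated by $c_1,\ldots,c_n$, and afterwards every $c_j$ satisfies $c_j(p)=0$. Then for $1\le i\le m$ and $1\le j\le n$, the product $\phi_{ij}$ lies in $A\otimes C$ and satisfies $\phi_{ij}(x,p)=a_i(x)\cdot 0=0$, so $\phi_{ij}|X\times\{p\}$ is the zero constant and each $\phi_{ij}$ belongs to the algebra $B$ in question.

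Let $B_0$ denote the closed unital subalgebra of $A\otimes C$ generated by the $mn$ functions $\{\phi_{ij}\}$. The inclusion $B_0\subseteq B$ is immediate, and the core of the proof is to verify $B\subseteq B_0$. Given $f\in B$ with constant value $\gamma$ on $X\times\{p\}$, decompose $f=\gamma\cdot 1 + (f-\gamma)$, so the second summand lies in the closed ideal of $A\otimes C$ consisting of functions vanishing on $X\times\{p\}$. Approximating such a function by finite algebraic-tensor sums $\sum_k u_k\otimes v_k$ and then replacing each $v_k$ by $v_k-v_k(p)$ (the corrections $\sum_k u_k(x)v_k(p)$ tend to zero because the limiting function vanishes on $X\times\{p\}$) shows that one may assume every $v_k$ lies in the maximal ideal $M_p=\{c\in C:c(p)=0\}$. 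Since $A$ is the closed unital algebra generated by $\{a_i\}$ and $M_p$ is the closed $C$-ideal generated by $\{c_j\}$, the problem reduces to showing that every monomial $a^\alpha\otimes c^\beta$ with $|\beta|\ge 1$ is uniformly approximable by polynomials in the $\phi_{ij}$'s.

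The crux---and what I expect to be the main obstacle---is this final monomial approximation. A $k$-fold product $\phi_{i_1 j_1}\cdots \phi_{i_k j_k}$ produces a monomial with equal $a$- and $c$-degrees, so degree-matched monomials are handled directly; to accommodate monomials with $|\alpha|\ne|\beta|$, some degree-balancing device is needed. My plan is to combine telescoped linear combinations of products of different lengths with the unit $1\in B_0$, using norm control from $\|a_i\|_X$ and $\|c_j\|_Y$ to keep the approximations convergent in the supremum norm. Once this combinatorial identity is in hand, the preceding density reductions immediately yield $B_0=B$, completing the proof.
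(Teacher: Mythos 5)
Your first two paragraphs are sound and agree with the paper's own argument: after normalizing the generators of $C$ to vanish at $p$, the algebra $B$ in question is $\C\cdot 1$ plus the closure of $A\otimes_{\mathrm{alg}}M_p$, and the inclusion $B\subseteq B_0$ reduces to showing that every monomial $a^\alpha\otimes c^\beta$ with $|\beta|\geq 1$ lies in the closed unital algebra $B_0$ generated by the $mn$ products $\phi_{ij}=a_i\otimes c_j$. The problem is the step you postpone to a hoped-for ``degree-balancing device'': no such device exists, because the inclusion you need is false. Take $m=n=1$, $A=C=A(D)$ on $X=Y=\oD$ with generators $z$ and $w$, and $p=0$. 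Then $A\otimes C$ is the bidisc algebra, $B$ contains $1\otimes w=w$ (it vanishes identically on $\oD\times\{0\}$), and the single candidate generator is $\phi_{11}=zw$. Every polynomial in $zw$, hence every element of the closed unital algebra generated by $zw$, is invariant under $(z,w)\mapsto(e^{i\theta}z,e^{-i\theta}w)$, whereas $w$ is not; so $w\in B\setminus B_0$. Moreover no choice of finitely many generators can help: the bounded point derivation $f\mapsto\frac{\partial f}{\partial w}(\cdot,0)$ annihilates constants and products of two functions vanishing on $\oD\times\{0\}$, so (after subtracting from each generator its constant value on $\oD\times\{0\}$) it maps the closed unital algebra generated by any $N$ elements of $B$ into the at most $N$-dimensional span of the derivatives of those generators, while it maps $B$ itself onto a set containing every power $z^j$ (apply it to $z^jw\in B$), whose linear span is infinite-dimensional. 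Hence this $B$ is not finitely generated at all, and Lemma~\ref{non-unital} as stated is false.

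You should not regard the unfinished step as a defect of your write-up alone: the paper's proof disposes of it in a single unproved sentence, namely that $(A\otimes C_0)\oplus\C$ ``is generated by the set $\{f_k\otimes g_l\}$,'' and that is exactly the assertion refuted above. Your instinct that the degree mismatch between $|\alpha|$ and $|\beta|$ is the main obstacle was correct; the obstacle is fatal rather than technical. What your reductions do prove is that $B$ is the closed linear span of $1$ together with the monomials $a^\alpha\otimes c^\beta$, $|\beta|\geq1$ --- equivalently, $B$ is generated by the infinite family $\{u\otimes c_j: u\in A,\ 1\leq j\leq n\}$. Any correct finiteness statement must either enlarge the generating set in a way depending on $A$ and $C$ or impose extra hypotheses; the derivation argument shows that the absence of nonzero bounded point derivations of $C$ at $p$ is a necessary condition. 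In particular the generator count $2m$ invoked in the proof of Theorem~\ref{T:3} is not justified by this lemma, and that step of the paper needs repair as well.
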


\begin{proof}
Let $f_1,\dots ,f_m$ be generators for $A$ and $g_1,\dots, g_n$ be generators for $C$.  Replacing each $g_j$ by $g_j-g_j(p)$, we may assume that each $g_j$ vanishes at $p$.  
Let
$$
C_0=\{g\in C: g(p)=0\}.
$$
Then $C_0$ is the smallest non-unital Banach algebra containing the functions $g_1,\dots, g_n$. Denote by $(A\otimes C_0) \oplus \C$ the smallest Banach algebra of functions on $X\times Y$ that contains the constants and the functions $f\otimes g$ defined by $(f\otimes g)(x,y) = f(x)g(y)$ for $f\in A$ and $g\in C_0$. Then
$$
(A\otimes C_0)\oplus \C=\{f\in A\otimes C: f|({X\times\{p\}})\ {\rm is\ a\ constant} \}.
$$
The proof is thus concluded by noting
that $(A\otimes C_0)\oplus \C$ is generated by the set $\{ f_k \otimes g_l:  k=1,\dots ,m {\rm\ and\ } l=1,\dots ,n\}$.
\end{proof}

The proof of Theorem~\ref{T:3} will use the fact that a locally compact Hausdorff space is open in each of its compactifications.  This fact is surely known, but we have not seen it in the standard topology texts, so we include a proof.

\bthm \label{open}
For a completely regular space, the following are equivalent:
\item{\rm (a)} $X$ is locally compact
\item{\rm (b)} $X$ is open in some compactification of $X$
\item{\rm (c)} $X$ is open in every compactification of $X$.
\ethm

\begin{proof}
The implication (c) $\implies$ (b) is immediate from the existence of a compactification of a completely regular space, and the implication 
(b) $\implies$ (c) is well known (and easily proven).  That (a) $\implies (c)$ is immediate from the following lemma.
\end{proof}

\blem \label{l.c.}
Let $X$ be a dense subspace of a Hausdorff space $Y$.  If $X$ is locally compact at a point $x\in X$, then $x$ is an interior point of $X$ relative to $Y$.
\elem

\bpf
Suppose that $X$ is locally compact at $x$.  Choose an open neighborhood $U$ of $x$ in $X$ with the closure $\overline U^X$ of $U$ in $X$ compact.  Then $\overline U^X$ must be closed in $Y$.  Consequently, $\overline U^X$ equals the closure $\overline U$ of $U$ in $Y$.  Since $U$ is open in $X$, we have $U=X\cap W$ for some open set $W$ of $Y$.  Then $W\setminus {\overline U}$ is an open set of $Y$ that is disjoint from $X$.  Thus $W\setminus {\overline U}$ must be empty, since $X$ is dense in $Y$.  Thus we have $W\subset {\overline U} = \overline U^X \subset X$.  Thus $W=U$, so $W$ is an open set of $Y$ such that $x\in W\subset X$.  Thus $x$ is an interior point of $X$ relative to $Y$.
\epf

We are now ready to prove Theorem~\ref{T:3}

\begin{proof}[Proof of Theorem~\ref{T:3}]
Let $\tX$ be a compactification of $X$ contained in $\R^m$.  (Note that because $X$ is homeomorphic to a \emph{bounded} subspace of $\R^m$, such a compactification necessarily exists.  If $X$ is compact, then $\tX=X$.)
Let $L$ be the Swiss cheese of Theorem~\ref{swiss} and consider the algebra
$$
A=\{f\in C(\tX)\otimes R(L): f|({\tX \times \{ 0 \}})\ {\rm is\ constant} \}.
$$
Then $\ma$ is the quotient space obtained from $\tX \times L$ by collapsing the set $\tX \times \{0\}$ to a point. 
We will denote points of $\ma$ by their representatives in $\tX \times L$. 
Then the set $\{(x,y)\in \ma: y\in [0,1)\}$ is contained in a single Gleason part $\Pi$ of $A$.  Lemma~\ref{peak} shows that no point of the form $(x,1)\in \ma$ lies in $\Pi$.

Since $X$ is locally compact, $\tX \setminus X$ is closed in $\tX$ by 
Theorem~\ref{open}, and there exists a function $h \in C(\tX)$  with range contained in $[\frac{1}{2},1]$ such that $h(\tX \setminus X)=1$ and $h(x)<1$ for all $x\in X$. 
(If $X$ is compact, simply take $h(x)=1/2$ for all $x\in X$.) 
Define $\rho :\tX \rightarrow \ma$ by 
$\rho(x)=(x,h(x))$,
and observe that $\rho$ is an embedding. The set $S=\rho(\tX)$ is a hull since the function $g$ defined on $\ma$ by $g(x,y)=1-\bigl(y/h(x)\bigr)$ is in $A$ and $S=g^{-1}(0)$. Furthermore, $\rho(X)=S\cap \Pi$. 

Note that $\ma$ contains no analytic discs, since neither 
${\mathfrak{M}}_{C(\tX)}$ nor ${\mathfrak{M}}_{R(L)}$ contains analytic discs.

It is well known that $R(L)$ is generated by 2 functions (see the last paragraph of Section~\ref{prelim}) and $C(\tX)$ is generated by $m$ functions.  Thus Lemma~\ref{non-unital} gives that $A$ is generated by $2m$ functions.

The algebras $A|(S\cap \Pi)$ and $C(\tX)$ are isometrically isometric; indeed, given 
$f\in C(\tX)$, the function $l$ on $\ma$ given by
$$
l(x, z)=z f(x) / h(x)
$$
is in $A$, and $f=l\circ \rho$.  Consequently, the map $T:A \rightarrow C(\tX)$ given by $T(l)=l\circ\rho$
induces an isometric isomorphism of $A|(S\cap \Pi)$ onto $C(\tX)$.

Now let the uniform algebra $B$ and the compact Hausdorff space $Y$ be obtained from $A$ as in Theorem~\ref{basic}.  Then $B$ is generated by $2m+2$ functions, say $f_1,\ldots, f_{2m+2}$.  Let $\tau:Y\rightarrow \C^{2m+2}$ be defined by $\tau(y)=\bigl(f_1(y),\ldots, f_{2m+2}(y)\bigr)$. Then the set $K=\tau(Y)\subset\C^{2m+2}$ has all the required properties.
\end{proof}

We conclude with examples showing that
in contrast to the situation in Theorems~\ref{T:1} and~\ref{T:2}  which provided complete characterizations of the spaces that occurred as Gleason parts in the given contexts, local compactness does not characterize the subspaces of a Euclidean space that occur as Gleason parts in polynomial hulls.  We begin with two examples of compact planar sets $L$ such that $R(L)$ has a Gleason part that is not locally compact.  These can be translated into examples involving algebras of the form $P(K)$, with $K$ in $\C^2$, by the standard device recalled at the end of Section~\ref{prelim}, but in both cases, the relevant Gleason part is not contained in the complementary set $\h K \bs K$.  For our final example, we in essence take the tensor product of such a $P(K)$ with $P(X)$ for $X$ as in Theorem~\ref{U_a} to get an example with the Gleason part in the complementary set, thus showing that local compactness of the space $X$ in Theorem~\ref{T:3} is not a necessary condition for the conclusion of (at least the first part of) the theorem to hold.

\begin{example}
Let $L\subset \C$ be the ``road runner set" obtained by deleting from the closed unit disc $\oD$ a sequence of open discs $\{ D_j \}_{j=1}^{\infty}$ with radii $r_j=9^{-j}$ and centers $c_j=3^{-j}+9^{-j}$. It is shown in \cite[Lemma 11.1]{GamelinR} that $R(L)$ has a single nontrivial  Gleason part $P$, namely the interior of $L$ union the origin.  Lemma~\ref{l.c.} shows that $P$ is not locally compact at $0$ since $\overline{P}=L$ but $0$ is not an interior point of $P$ relative to $L$. 
\end{example}

\begin{example}\label{example}
It is proven in \cite[Theorem 1.5]{Izzo} that there exists a Swiss cheese $L=\oD\setminus (\bigcup_{j=1}^{\infty}D_j)$ with a Gleason part $P$ for $R(L)$ of full measure in $L$. We claim that $P$ is locally compact at no point. 
As shown in \cite[Remark~5.6]{Izzo}, the fact that $P$  has full measure in the Swiss cheese $L$ implies that $P$  is dense in $L$.  Applying Lemma~\ref{peak} (for instance) shows that each point of 
$\bigcup_{j=1}^{\infty}\partial D_j$, a dense subset of $L$, is a one-point part, so $P$ has empty interior in $L$.  The claim now follows from Lemma~\ref{l.c.}.
\end{example}

\begin{example}
Let $L$ be a Swiss cheese
such that $R(L)$ has a Gleason part $P$ that is not locally compact (for instance the Swiss cheese of either Example 1 or 2).  Let $Y=\tau(L)$, where $\tau$ is as in the paragraph at the end of Section~\ref{prelim}. 
Let $X$ be as in Theorem~\ref{U_a}.  Then $X\times Y$ is a compact set in $\C^4$ such that $\h {X\times Y}= \h X \times \h Y$ contains no analytic discs, and $\{0\} \times \tau(P)$ is a Gleason part for $P(X\times Y)$ that is not locally compact and is contained in $\h {X\times Y} \bs (X\times Y)$.
\end{example}

\end{document}